\newtheorem{lemma}{Lemma}
\newtheorem{theorem}{Theorem}
\newtheorem{corollary}{Corollary}
\newcommand {\E} {\mathcal{E}}
\newcommand {\p} {\mathbb{P}}
\newcommand {\Z} {\mathbb{Z}}
\newcommand {\N} {\mathbb{N}}
\newcommand {\R} {\mathbb{R}}
\def\blfootnote{\xdef\@thefnmark{}\@footnotetext}\makeatother
\title{\bf A note on the Duffin-Schaeffer conjecture with slow divergence}
\author{Christoph Aistleitner} 
\address{Department of Applied Mathematics, School of Mathematics and Statistics, University of New South Wales, Sydney NSW 2052, Australia}
\email{aistleitner@math.tugraz.at}
\thanks{The author is supported by a Schr\"odinger scholarship of the Austrian Research
Foundation (FWF)}
\subjclass[2010]{11K60, 11J83}
\begin{document}

\begin{abstract}
For a non-negative function $\psi: ~ \N \to \R$, let $W(\psi)$ denote the set of real numbers $x$ for which the inequality $|n x - a| < \psi(n)$  has infinitely many coprime solutions $(a,n)$. The Duffin--Schaeffer conjecture, one of the most important unsolved problems in metric number theory, asserts that $W(\psi)$ has full measure provided
\begin{equation} \label{dsccond}
\sum_{n=1}^\infty \frac{\psi(n) \varphi(n)}{n} = \infty.
\end{equation}
Recently Beresnevich, Harman, Haynes and Velani proved that $W(\psi)$ has full measure under the \emph{extra divergence} condition
$$
\sum_{n=1}^\infty \frac{\psi(n) \varphi(n)}{n \exp(c (\log \log n) (\log \log \log n))} = \infty \qquad \textrm{for some $c>0$}.
$$
In the present note we establish a \emph{slow divergence} counterpart of their result: $W(\psi)$ has full measure, provided~\eqref{dsccond} holds and additionally there exists some $c>0$ such that
$$
\sum_{n=2^{2^h}+1}^{2^{2^{h+1}}} \frac{\psi(n) \varphi(n)}{n} \leq \frac{c}{h} \qquad \textrm{for all \quad $h \geq 1$.}
$$
\end{abstract}

\date{}
\maketitle

\section{Introduction}

For a non-negative function $\psi: ~ \N \to \R$ we define sets $\E_n \subset \R / \Z$ by
\begin{equation} \label{en}
\E_n = \bigcup_{\substack{1 \leq a \leq n,\\\gcd(a,n)=1}} \left( \frac{a-\psi(n)}{n},\frac{a+\psi(n)}{n} \right)
\end{equation}
and write $W(\psi)$ for the limsup-set
\begin{equation} \label{wn}
W(\psi) = \limsup_{N \to \infty} \E_N := \bigcap_{N=1}^\infty \bigcup_{n =N}^\infty \E_n.
\end{equation}
An important open problem in metric Diophantine approximation is to specify under which conditions on $\psi$ we have $\lambda(W(\psi))=1$, that is, under which conditions imposed on $\psi$ for almost all $x \in \R / \Z$ (in the sense of Lebesgue measure) the inequality
$$
|n x - a| < \psi(n)  
$$
has infinitely many coprime solutions $(a,n)$. It is easy to see that the condition
\begin{equation} \label{dsc}
\sum_{n=1}^\infty \frac{\psi(n) \varphi(n)}{n} = \infty
\end{equation}
is \emph{necessary} to have $\lambda(W(\psi))=1$; a famous conjecture, stated by Duffin and Schaeffer~\cite{ds} in 1941, claims that this condition is also \emph{sufficient}. Recently Beresnevich, Harman, Haynes and Velani~\cite{bhhv} proved that $\lambda(W(\psi))=1$ under the \emph{extra divergence} condition
\begin{equation} \label{in}
\sum_{n=1}^\infty \frac{\psi(n) \varphi(n)}{n \exp(c (\log \log n)(\log \log \log n))} = \infty \qquad \textrm{for some $c>0$}.
\end{equation}
The purpose of the present note is to prove a \emph{slow divergence} version of the Duffin--Schaeffer conjecture. For $h \geq 1$ we set
\begin{equation}\label{deltah}
\Delta_h = \left\{ 2^{2^h}+1, \dots, 2^{2^{h+1}} \right\} \qquad \textrm{and} \qquad S_h = \sum_{n \in \Delta_h} \frac{\psi(n) \varphi(n)}{n}.
\end{equation}

\begin{theorem} \label{th1}
We have $\lambda(W(\psi))=1$, provided~\eqref{dsc} holds and there exists a constant $c$ such that
\begin{equation} \label{condh}
S_h \leq \frac{c}{h} \qquad \textrm{for all \quad $h \geq 1$.}
\end{equation}
\end{theorem}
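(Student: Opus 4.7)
The plan is to combine a Chung--Erd\H{o}s second moment argument within each dyadic block $\Delta_h$ with a quasi-independence estimate between distant blocks, and then apply a divergent Borel--Cantelli lemma together with Gallagher's zero-one law. Without loss of generality I assume $\psi(n) \leq 1/2$ for every $n$, so that $\lambda(\E_n) = 2\psi(n)\varphi(n)/n$; set $E_h := \bigcup_{n \in \Delta_h} \E_n$, so that $W(\psi) = \limsup_{h \to \infty} E_h$. By Gallagher's zero-one theorem $\lambda(W(\psi)) \in \{0,1\}$, and it is enough to establish $\lambda(W(\psi)) > 0$.

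\emph{Step 1 (intra-block lower bound).} Chung--Erd\H{o}s gives
$$\lambda(E_h) \;\geq\; \frac{4 S_h^2}{2 S_h + \Sigma_h}, \qquad \text{where} \qquad \Sigma_h := \sum_{\substack{m,n \in \Delta_h \\ m \neq n}} \lambda(\E_m \cap \E_n).$$
The classical Erd\H{o}s--Vaaler / Pollington--Vaughan overlap estimates, once summed across $\Delta_h$, yield a bound of the form $\Sigma_h \ll S_h^2 (\log h)^A$ for some fixed exponent $A$. The slow divergence hypothesis \eqref{condh} then gives $\Sigma_h \ll S_h \cdot (\log h)^A / h = o(S_h)$ as $h \to \infty$, and hence $\lambda(E_h) \geq c_1 S_h$ for all sufficiently large $h$.

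\emph{Step 2 (inter-block quasi-independence).} Because the blocks are doubly-exponentially separated, the events $E_g$ and $E_h$ with $g < h$ behave almost independently: for $m \in \Delta_g$ and $n \in \Delta_h$ one has $m \leq \sqrt{n}$, so the same family of overlap estimates implies $\lambda(E_g \cap E_h) \leq C \lambda(E_g) \lambda(E_h)$ for some universal constant $C$.

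\emph{Step 3 (conclusion).} Rewriting hypothesis \eqref{dsc} as $\sum_h S_h = \infty$, Step 1 gives $\sum_h \lambda(E_h) = \infty$, and the quasi-independent divergence Borel--Cantelli lemma combined with Step 2 yields $\lambda(\limsup_h E_h) \geq 1/C > 0$. Gallagher's zero-one law then upgrades this to $\lambda(W(\psi)) = 1$.

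The main obstacle is the intra-block overlap bound in Step 1. Pollington--Vaughan type estimates for $\lambda(\E_m \cap \E_n)$ always carry a factor depending on the small prime divisors of $mn/\gcd(m,n)^2$, which on summation across a full dyadic block introduces a polylogarithmic loss in $h$. The slow divergence hypothesis $S_h \leq c/h$ is calibrated precisely so as to absorb this loss, since then $S_h^2 (\log h)^A$ is negligible compared to $S_h$. Without \eqref{condh}, an individual block can carry enough mass that the available overlap bounds are no longer sufficient to control $\Sigma_h$ by $o(S_h)$, and this is exactly the obstruction that has historically prevented the proof of the full Duffin--Schaeffer conjecture along these lines.
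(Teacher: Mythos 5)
Your overall architecture --- Chung--Erd\H{o}s within each block, quasi-independence across blocks, a divergence Borel--Cantelli lemma, then Gallagher's zero-one law --- is exactly the paper's, but two of your quantitative claims are wrong as stated, and one of them hides a genuine gap. In Step 2 the assertion that $m \leq \sqrt{n}$ for all $m \in \Delta_g$, $n \in \Delta_h$ with $g < h$ is false for adjacent blocks: $m = 2^{2^{g+1}}$ lies in $\Delta_g$ while $n = 2^{2^{g+1}}+1$ lies in $\Delta_{g+1}$, so $m$ and $n$ are essentially equal. Moreover, even where $m \leq \sqrt{n}$ does hold it is not by itself enough: to force $P(m,n) \ll 1$ in the Strauch/Pollington--Vaughan bound one needs $D(m,n) \geq n\psi(m)/\gcd(m,n) \geq n/m^2$ to be a fixed positive power of $n$ (so that only boundedly many primes survive in the product), which requires roughly $n \geq m^4$, and also uses the normalisation $\psi(m) \geq 1/m$ that you never introduced (it is the Erd\H{o}s--Vaaler reduction, and it is needed --- with only $\psi \leq 1/2$ the quantity $D(m,n)$ can be tiny). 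The paper fixes all of this by first selecting a residue class $j$ modulo $3$ with $\sum_{h \equiv j} S_h = \infty$ and working only with the blocks $\Delta_{3h+j}$, which guarantees $n \geq m^4$ for elements of distinct retained blocks. Alternatively one can keep every block and observe that the adjacent pairs contribute at most $\sum_h \lambda(E_h)$, which is negligible against $\bigl(\sum_h \lambda(E_h)\bigr)^2$ in the Erd\H{o}s--R\'enyi denominator; but some such argument must be supplied, and your proposal contains neither.

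In Step 1 the intra-block overlap bound is also misstated, in a way that matters for understanding why the hypothesis has the shape it does. The available estimate is $P(m,n) \ll \log\log\max(m,n)$ by Mertens, and for $m,n \in \Delta_h$ this is of order $h$ (since $\log\log 2^{2^{h+1}} \asymp h$), not $(\log h)^A$; the linear order is attained, e.g., when $mn/\gcd(m,n)^2$ is a primorial and $D(m,n) \asymp 1$. Hence the honest bound is $\Sigma_h \ll h S_h^2$, and the hypothesis $S_h \leq c/h$ is calibrated to absorb precisely this linear factor, giving $\Sigma_h \ll c\, S_h$ --- which is $O(S_h)$ rather than the $o(S_h)$ you claim, but still suffices for $\lambda(E_h) \gg S_h$ via Chung--Erd\H{o}s. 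Your closing paragraph, which locates the loss at a power of $\log h$, would suggest that a hypothesis as weak as $S_h \ll (\log h)^{-A-1}$ should do; that does not follow from this argument. With these two repairs your proof closes and coincides with the paper's.
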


The partitioning of the index set $\N$ into blocks $\Delta_h$ of the form~\eqref{deltah} is quite natural, since it implies that for indices $m$ and $n$ from non-adjacent blocks the sets $\E_m$ and $\E_n$ are quasi-independent (see for example~\cite{bhhv,hpv} and the proof of Theorem~\ref{th1} below). In condition~\eqref{dsc} we can assume without loss of generality that $S_h \leq 1$, so it would be very interesting to see if~\eqref{condh} can be further relaxed.\\

As a consequence of Theorem \ref{th1} we immediately get the following corollary.

\begin{corollary} \label{co1}
We have $\lambda(W(\psi))=1$, provided
\begin{equation*}
\sum_{h=1}^\infty \min(S_h, 1/h) = \infty.
\end{equation*}

\end{corollary}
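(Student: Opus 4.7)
The plan is to deduce Corollary \ref{co1} from Theorem \ref{th1} by a simple truncation of $\psi$ block by block, chosen so that the modified function exactly realises the quantities $\min(S_h,1/h)$ appearing in the hypothesis.

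Concretely, for each $h \geq 1$ I set
\[
\lambda_h = \min\!\left(1, \frac{1}{h S_h}\right) \qquad \textrm{(with the convention $\lambda_h = 1$ if $S_h = 0$),}
\]
and define $\tilde\psi : \N \to \R$ by $\tilde\psi(n) = \lambda_h \psi(n)$ for $n \in \Delta_h$. Since $0 \leq \lambda_h \leq 1$ we have $\tilde\psi \leq \psi$ pointwise, and therefore $\E_n(\tilde\psi) \subseteq \E_n(\psi)$ for every $n$, which gives the trivial inclusion $W(\tilde\psi) \subseteq W(\psi)$. Hence it suffices to show $\lambda(W(\tilde\psi)) = 1$.

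Next I verify that $\tilde\psi$ satisfies the hypotheses of Theorem \ref{th1}. Writing $\tilde S_h$ for the block sum in \eqref{deltah} associated to $\tilde\psi$, one has by construction
\[
\tilde S_h = \lambda_h S_h = \min(S_h, 1/h).
\]
In particular $\tilde S_h \leq 1/h$ for every $h$, so condition \eqref{condh} is satisfied with $c=1$. Moreover
\[
\sum_{n=1}^\infty \frac{\tilde\psi(n) \varphi(n)}{n} = \sum_{h=1}^\infty \tilde S_h = \sum_{h=1}^\infty \min(S_h, 1/h) = \infty
\]
by assumption, so \eqref{dsc} also holds for $\tilde\psi$.

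Applying Theorem \ref{th1} to $\tilde\psi$ then yields $\lambda(W(\tilde\psi)) = 1$, and the inclusion $W(\tilde\psi) \subseteq W(\psi)$ completes the proof. There is no genuine obstacle here: the argument is a one-step reduction, and the only thing to be careful about is the bookkeeping that ensures the rescaled block sums are exactly $\min(S_h, 1/h)$, so that divergence of $\sum \min(S_h, 1/h)$ translates directly into the divergence condition \eqref{dsc} for $\tilde\psi$.
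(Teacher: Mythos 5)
Your proof is correct and is exactly the truncation argument the paper has in mind when it says the corollary follows ``immediately'' from Theorem~\ref{th1}: shrink $\psi$ block by block so that the new block sums equal $\min(S_h,1/h)$, verify \eqref{dsc} and \eqref{condh} (with $c=1$) for the shrunken function, and use the monotonicity $W(\tilde\psi)\subseteq W(\psi)$. The only (cosmetic) issue is that your scaling factors $\lambda_h$ clash notationally with $\lambda$ for Lebesgue measure; rename them to avoid confusion.
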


For more background on the Duffin--Schaeffer conjecture and, more generally, on metric number theory, see Harman's monograph~\cite{harb}. A survey on recent results can be found in~\cite{bbdv}.\\

\section{Proof of Theorem}

Without loss of generality we will assume that 
$$
\frac{1}{n} \leq \psi(n) \leq \frac{1}{2} \quad \textrm{whenever $\psi(n) \neq 0$, for $n \geq 1$,}
$$
which is justified by the Erd\H os--Vaaler theorem~\cite{vaaler} and by~\cite[Theorem 2]{poll}. The following lemma is the Chung--Erd\H os inequality (see for example~\cite[Chapter 1.6]{zhe}).
\begin{lemma} \label{lemmace}
Let $A_1, \dots, A_N$ be events in a probability space. Then
$$
\p \left( \bigcup_{n=1}^N A_n \right) \geq \frac{ \left( \sum_{n=1}^N \p (A_n) \right)^2}{\sum_{n=1}^N \p(A_n) + 2 \sum_{1 \leq m < n \leq N} \p (A_m \cap A_n) }.
$$
\end{lemma}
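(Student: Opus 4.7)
The plan is to prove the Chung--Erd\H os inequality via the classical second-moment method, which in the end amounts to a single application of Cauchy--Schwarz to a well-chosen counting random variable.

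First, I would introduce the counting variable
$$
X = \sum_{n=1}^N \mathds{1}_{A_n},
$$
whose usefulness comes from the identity $\{X > 0\} = \bigcup_{n=1}^N A_n$, so that lower-bounding $\p\left(\bigcup_n A_n\right)$ is the same as lower-bounding $\p(X > 0)$. The next step is to compute the first two moments of $X$. Linearity of expectation gives $\mathbb{E}[X] = \sum_{n=1}^N \p(A_n)$, and expanding the square $X^2$ while using $\mathds{1}_{A_n}^2 = \mathds{1}_{A_n}$ together with $\mathds{1}_{A_m} \mathds{1}_{A_n} = \mathds{1}_{A_m \cap A_n}$ yields
$$
\mathbb{E}[X^2] = \sum_{n=1}^N \p(A_n) + 2 \sum_{1 \leq m < n \leq N} \p(A_m \cap A_n),
$$
which is precisely the denominator appearing in the statement of the lemma.

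The final step is to apply the Cauchy--Schwarz inequality to the product $X \cdot \mathds{1}_{\{X > 0\}}$. Since $X$ vanishes on $\{X = 0\}$, we have $X = X \cdot \mathds{1}_{\{X > 0\}}$ pointwise, hence
$$
\mathbb{E}[X]^2 = \mathbb{E}\!\left[X \cdot \mathds{1}_{\{X > 0\}}\right]^2 \leq \mathbb{E}[X^2] \cdot \mathbb{E}\!\left[\mathds{1}_{\{X > 0\}}^2\right] = \mathbb{E}[X^2] \cdot \p(X > 0).
$$
Dividing by $\mathbb{E}[X^2]$ and substituting the two moment formulas gives exactly the bound claimed in the lemma.

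There is really no serious obstacle: this is essentially the Paley--Zygmund inequality, and the only substantive choice is to use $\mathds{1}_{\{X > 0\}}$ (rather than some other function of $X$) as the companion factor inside Cauchy--Schwarz, since that is what makes $\p\left(\bigcup_n A_n\right)$ appear on the right-hand side. The only degenerate case to check is $\mathbb{E}[X^2] = 0$, which forces $\p(A_n) = 0$ for every $n$; then both numerator and left-hand side of the lemma vanish and the inequality holds trivially (with the ratio on the right interpreted as $0$).
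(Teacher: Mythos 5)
Your proof is correct and complete: the identity $\{X>0\}=\bigcup_n A_n$, the two moment computations, and the Cauchy--Schwarz step applied to $X\cdot\mathds{1}_{\{X>0\}}$ together give exactly the stated bound, and you even handle the degenerate case $\mathbb{E}[X^2]=0$. The paper itself gives no proof of this lemma --- it is quoted as the Chung--Erd\H os inequality with a reference to Lin and Bai --- and your second-moment argument is precisely the standard proof of that result, so there is nothing to compare beyond noting that you have supplied the proof the paper omits.
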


We will use the Erd\H os--R\'enyi version of the Borel--Cantelli lemma in the following form (the main results of \cite{pet}, in the special case $H=0$).
\begin{lemma}\label{lemma2}
Let $A_1, A_2, \dots$ be events satisfying
$$
\sum_{n=1}^\infty \p(A_n) = \infty.
$$
Then
$$
\p \left(\limsup_{n \to \infty} A_n \right) \geq \limsup_{N \to \infty} \frac{\left( \sum_{n=1}^N \p(A_n) \right)^2}{2 \sum_{1 \leq m < n \leq N} \p (A_m \cap A_n)}.
$$
\end{lemma}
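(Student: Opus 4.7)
The plan is to derive Lemma~\ref{lemma2} from the Chung--Erd\H os inequality (Lemma~\ref{lemmace}) by applying the latter to arbitrarily late tails of the event sequence. For fixed integers $K \geq 1$ and $N > K$, write
$$
T_{K,N} = \sum_{n=K}^N \p(A_n), \qquad C_{K,N} = \sum_{K \leq m < n \leq N} \p(A_m \cap A_n),
$$
and set $T_N = T_{1,N}$, $C_N = C_{1,N}$. Applying Lemma~\ref{lemmace} to $A_K, \dots, A_N$ gives
$$
\p\!\left(\bigcup_{n=K}^N A_n\right) \geq \frac{T_{K,N}^2}{T_{K,N} + 2 C_{K,N}},
$$
and monotonicity in $N$ shows that the left-hand side is at most $\p(\bigcup_{n \geq K} A_n)$, so it remains to analyse the $\limsup_{N \to \infty}$ of the right-hand side and then let $K \to \infty$.

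The first step is to show that the linear term $T_{K,N}$ in the denominator is negligible. Since $\sum_n \p(A_n) = \infty$, for each fixed $K$ we have $T_{K,N} \to \infty$ as $N \to \infty$. A short case analysis --- splitting according to whether $T_{K,N} \leq 2 C_{K,N}$ or $T_{K,N} > 2 C_{K,N}$ along a given subsequence --- yields
$$
\limsup_{N \to \infty} \frac{T_{K,N}^2}{T_{K,N} + 2 C_{K,N}} \geq \limsup_{N \to \infty} \frac{T_{K,N}^2}{2 C_{K,N}}
$$
(in the second case the quotient is at least $T_{K,N}/2 \to \infty$). The second step is to replace the tail quantities by the full ones. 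Since $C_{K,N} \leq C_N$ trivially, and since $T_{K,N} = T_N - T_{K-1}$ with $T_{K-1}$ fixed while $T_N \to \infty$, one obtains $T_{K,N}/T_N \to 1$ and hence
$$
\limsup_{N \to \infty} \frac{T_{K,N}^2}{2 C_{K,N}} \geq \limsup_{N \to \infty} \frac{T_{K,N}^2}{2 C_N} = \limsup_{N \to \infty} \frac{T_N^2}{2 C_N}.
$$
The bound so obtained on $\p(\bigcup_{n \geq K} A_n)$ no longer depends on $K$.

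To finish, the events $B_K := \bigcup_{n \geq K} A_n$ are decreasing with $\bigcap_{K \geq 1} B_K = \limsup_{n \to \infty} A_n$, so continuity of probability gives $\p(\limsup_n A_n) = \lim_{K \to \infty} \p(B_K)$, which is bounded below by the $K$-independent quantity from the previous step; this is the desired conclusion. The main technical point to be careful about is the case analysis in the first step: one must verify that the linear contribution to the Chung--Erd\H os denominator really is dominated on the subsequence realising the target $\limsup$, both when that $\limsup$ is a positive finite number and when it is infinite (and trivially when it is zero). Everything else reduces to routine manipulations of partial sums enabled by the divergence hypothesis $\sum \p(A_n) = \infty$.
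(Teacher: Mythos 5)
Your proposal is correct in outline, but note that the paper does not prove this lemma at all: it is quoted directly from Petrov \cite{pet} (the case $H=0$ of his generalized Borel--Cantelli lemma). What you have written is the standard self-contained derivation of the Erd\H os--R\'enyi/Kochen--Stone/Petrov bound from the Chung--Erd\H os inequality: apply Lemma~\ref{lemmace} to the tail $A_K,\dots,A_N$, show the linear term in the denominator is negligible, pass from tail sums to full sums using $C_{K,N}\leq C_N$ and $T_{K,N}/T_N\to 1$, and finish by continuity of measure along the decreasing sets $B_K=\bigcup_{n\geq K}A_n$. All of these steps are sound, and this route has the merit of making the paper self-contained, since Lemma~\ref{lemmace} is already stated. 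The one point that needs repair is exactly the one you flag: the dichotomy $T_{K,N}\leq 2C_{K,N}$ versus $T_{K,N}>2C_{K,N}$ does not give the claimed inequality with the right constant. In the first case you only get $T_{K,N}^2/(T_{K,N}+2C_{K,N})\geq T_{K,N}^2/(4C_{K,N})$, i.e.\ half of the target $\limsup$, which proves the lemma with an extra factor $1/2$ on the right-hand side (harmless for the application in this paper, but weaker than the statement). Two clean fixes: either run the same dichotomy with threshold $\ve\,C_{K,N}$ for arbitrary $\ve>0$ and let $\ve\to 0$ at the end; or, more directly, observe that Lemma~\ref{lemmace} itself forces $T_{K,N}^2/(T_{K,N}+2C_{K,N})\leq 1$, hence $2C_{K,N}\geq T_{K,N}(T_{K,N}-1)$, and since $T_{K,N}\to\infty$ this gives $T_{K,N}/C_{K,N}\to 0$, so that $T_{K,N}^2/(T_{K,N}+2C_{K,N})$ and $T_{K,N}^2/(2C_{K,N})$ have the same $\limsup$ with no case analysis needed. (One should also note that $C_N>0$ eventually, since pairwise disjoint events with divergent probability sum cannot exist; this makes all the quotients well defined.)
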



The following lemma is due to Strauch~\cite{strauch}, and has also been found independently by Pollington and Vaughan~\cite{poll}. We use the formulation from~\cite[Lemma 2]{bhhv}.
\begin{lemma} \label{lemma3}
For $m \neq n$ we have
$$
\lambda (\E_m \cap \E_n) \ll \lambda(\E_m) \lambda(\E_n) P(m,n),
$$
where
$$
P(m,n) = \prod_{\substack{p | mn/\gcd(m,n)^2, \\ p > D(m,n)}} \left(1 - \frac{1}{p}\right)^{-1} \qquad \textrm{with} \qquad D(m,n) = \frac{\max (n \psi(m),m \psi(n))}{\gcd(m,n)}.
$$
\end{lemma}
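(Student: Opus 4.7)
The plan is to express $\lambda(\E_m \cap \E_n)$ as a sum of overlap lengths of the individual intervals constituting $\E_m$ and $\E_n$, reduce this to a counting problem for coprime pairs $(a,b)$ satisfying a linear Diophantine condition, and then carry out the count via a prime-by-prime sieve. Set $d = \gcd(m,n)$ and write $m = dm'$, $n = dn'$ with $\gcd(m',n') = 1$. The intervals $I_{a,m} = (a/m - \psi(m)/m, a/m + \psi(m)/m)$ and $I_{b,n}$ intersect iff $|an - bm| < n\psi(m) + m\psi(n)$, which, since $d \mid an - bm$, becomes $an - bm = dk$ for some integer $k$ with $|k| < K := (n\psi(m) + m\psi(n))/d$. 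Each such overlap has length at most $2\min(\psi(m)/m, \psi(n)/n)$, so
$$
\lambda(\E_m \cap \E_n) \leq 2 \min\!\big(\tfrac{\psi(m)}{m}, \tfrac{\psi(n)}{n}\big) \cdot N(m,n),
$$
where $N(m,n)$ counts the coprime pairs $(a,b)$ arising in this way.

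The central step is to prove
$$
N(m,n) \ll dK \cdot \frac{\varphi(m)\varphi(n)}{mn} \cdot P(m,n).
$$
Dropping the coprimality condition, the reduced equation $an' - bm' = k$ has exactly $d$ solutions in $\{1,\ldots,m\} \times \{1,\ldots,n\}$ for each fixed $k$, which gives an unrestricted count of size $O(dK)$. To impose $\gcd(a,m) = \gcd(b,n) = 1$, I would run an inclusion--exclusion prime by prime. Primes $p \mid d$ contribute the clean Euler factor $(1-1/p)^2$, since modulo such a prime the constraint $an' \equiv bm' \pmod p$ is trivial and $a \bmod p$, $b \bmod p$ are independently uniform. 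For a prime $p \mid m'$ (so $p \mid m$ and $p \nmid n$), the constraint forces $a \equiv k (n')^{-1} \pmod p$, and hence $p \nmid a$ becomes the condition $p \nmid k$; averaging over $k \in [-K, K]$ gives the expected saving $(1-1/p)$ provided $p$ is small enough relative to $K$, and an analogous statement holds for $p \mid n'$. When $p$ is comparable to or exceeds $K$ --- quantitatively, once $p > D(m,n)$ --- the averaging argument is too weak to extract this saving, and one must instead absorb an extra factor $(1-1/p)^{-1}$ into the bound. This is precisely the origin of the loss factor $P(m,n)$.

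Finally, I would assemble the pieces using the elementary identity $\min(\psi(m)/m, \psi(n)/n) \cdot K \leq 2\psi(m)\psi(n)/d$ (a one-line consequence of $\min(x,y)(x+y) \leq 2xy$), together with $\lambda(\E_m) \lambda(\E_n) = 4\varphi(m)\varphi(n)\psi(m)\psi(n)/(mn)$ (valid under the standing assumption $\psi(n) \leq 1/2$, which ensures that the intervals making up $\E_m$ are pairwise disjoint), to obtain $\lambda(\E_m \cap \E_n) \ll \lambda(\E_m) \lambda(\E_n) P(m,n)$. The main obstacle is the prime-by-prime step: one has to verify that the cutoff between the "saving" and "loss" regimes really does appear at $D(m,n) = \max(n\psi(m), m\psi(n))/d$ and that the error terms from truncated averages are absorbed uniformly in $m$ and $n$. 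Minor technical points, such as the degenerate case $k = 0$ (which can occur only when $m \mid n$ or $n \mid m$), require a separate short argument.
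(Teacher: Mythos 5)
This lemma is not proved in the paper at all: it is quoted from the literature (Strauch, and independently Pollington--Vaughan), in the formulation of \cite[Lemma 2]{bhhv}. So the only meaningful comparison is with the Pollington--Vaughan argument itself, and your outline does follow its structure faithfully: the reduction of $\lambda(\E_m\cap\E_n)$ to counting coprime pairs $(a,b)$ with $|an-bm|<n\psi(m)+m\psi(n)$, the observation that each overlap has length at most $2\min(\psi(m)/m,\psi(n)/n)$, and the closing arithmetic (the inequality $\min(x,y)(x+y)\leq 2xy$ and the identity $\lambda(\E_m)=2\psi(m)\varphi(m)/m$ under $\psi\leq 1/2$) are all correct. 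Your remark about $k=0$ is in fact vacuous: $an'=bm'$ with $\gcd(m',n')=1$ forces $m'\mid a$ and $n'\mid b$, which together with $\gcd(a,m)=\gcd(b,n)=1$ gives $m'=n'=1$ and hence $m=n$, excluded by hypothesis.

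The genuine gap is the one you yourself flag: the central estimate $N(m,n)\ll dK\,\varphi(m)\varphi(n)(mn)^{-1}P(m,n)$ is asserted, not proved, and it is the entire content of the lemma. The prime-by-prime heuristic you describe is the right intuition, but making it rigorous is not a routine verification. For a fixed $k\neq 0$ the sieve count of coprime pairs on the line $an-bm=dk$ produces a factor $\prod_{p\mid m'n',\,p\nmid k}(1-1/p)$, which one must then sum over $0<|k|<K$; the crux is the average of $\prod_{p\mid m'n',\,p\mid k}(1-1/p)^{-1}$ over this range, handled in Pollington--Vaughan by expanding into a multiplicative sum over divisors of $m'n'$ and splitting at the threshold $K\asymp D(m,n)$. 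That computation, which is where the product $P(m,n)$ and its precise cutoff actually emerge, is missing; as written, the proposal is a correct roadmap to the known proof rather than a proof. Since the paper treats the lemma as a black box, the honest course here is either to carry out that sieve average in full or to cite \cite{poll} or \cite{strauch} as the paper does.
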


\begin{proof}[Proof of Theorem~\ref{th1}]
By~\eqref{dsc} there exists a number $j \in \{0,1,2\}$ such that
$$
\sum_{\substack{h \geq 1,\\h \equiv j \mod 3}} S_h = \infty.
$$
We will assume that $j=0$; the other cases can be treated in exactly the same way. We set
$$
B_h = \bigcup_{n=2^{2^{3h}}+1}^{2^{2^{3h+1}}} \E_n \qquad \textrm{and} \qquad T_h = \sum_{n=2^{2^{3h}}+1}^{2^{2^{3h+1}}} \frac{\psi(n) \varphi(n)}{n}, \qquad h \geq 1.
$$
Then $T_h = S_{3h},~h \geq 1,$ and $\sum_{h=1}^\infty T_h = \infty$. 
By Mertens's theorem \cite[Theorem 429]{hw} the function $P(m,n)$ in Lemma \ref{lemma3} is bounded by
$$
P(m,n) \ll \log \log (\max(m,n)).
$$
Thus there exists a constant $\hat{c}$ such that for $m,n \in \Delta_h$ for some $h$ and $m \neq n$ we have
$$
P(m,n) \leq \hat{c} h,
$$
and consequently 
\begin{equation} \label{imp}
\lambda(\E_m \cap \E_m) \leq \hat{c} h \lambda(\E_m) \lambda(\E_n).
\end{equation}
Thus by Lemma~\ref{lemmace}
\begin{eqnarray}
\lambda (B_h) & \geq & \frac{T_h^2}{T_h + 2 \sum_{\substack{m,n \in \Delta_{3h},\\m < n}} \lambda(\E_m \cap \E_m)} \nonumber\\
& \geq & \frac{T_h^2}{T_h + 6 \hat{c} h T_h^2}.\label{conc}
\end{eqnarray}
Note that~\eqref{condh} implies
$$
3 h T_h^2 \leq c T_h, \qquad h \geq 1.
$$
Consequently we can conclude from~\eqref{conc} that
\begin{equation} \label{she}
\lambda (B_h) \geq \frac{T_h^2}{T_h + 2 \hat{c} c T_h} \gg T_h,
\end{equation}
which in particular implies that
\begin{equation}  \label{bhi}
\sum_{h=1}^\infty \lambda(B_h) = \infty.
\end{equation} 
Now let $h_1 < h_2$. Then for $m \in \Delta_{3h_1}$ and $n \in \Delta_{3h_2}$ we have $n \geq m^4$. Thus
$$
D(m,n) \geq \frac{n \frac{1}{m}}{m} \geq \sqrt{n},
$$
which by Lemma~\ref{lemma3} implies
$$
\lambda(\E_m \cap \E_n) \ll \lambda(\E_m) \lambda (\E_n).
$$
Consequently we get
\begin{eqnarray}
& & \lambda (B_{h_1} \cap B_{h_2}) = \lambda \left( \left(\bigcup_{m \in \Delta_{3h_1}} \E_m \right) \cap \left(\bigcup_{n \in \Delta_{3h_2}} \E_n \right)  \right) \nonumber\\
& = & \lambda \left( \bigcup_{m \in \Delta_{3h_1},~n \in \Delta_{3h_2}} ( \E_m \cap \E_n) \right)  \nonumber\\
& \leq & \sum_{m \in \Delta_{3h_1}, ~n \in \Delta_{3h_2}} \lambda (\E_m \cap \E_n) \nonumber\\
& \ll & \sum_{m \in \Delta_{3h_1}, ~n \in \Delta_{3h_2}} \lambda (\E_m) \lambda(\E_n) \nonumber\\
& \ll & \underbrace{\left(\sum_{m \in \Delta_{3h_1}} \lambda(\E_m) \right)}_{=T_{h_1}} \underbrace{\left(\sum_{n \in \Delta_{3h_2}} \lambda(\E_n) \right)}_{=T_{h_2}} \nonumber\\
& \ll & \lambda(B_{h_1}) \lambda(B_{h_2}), \label{ll}
\end{eqnarray}
where for the last inequality we used~\eqref{she}. 
Note that clearly $\limsup_{h \to \infty} B_h \subset W(\psi)$. Thus, recalling \eqref{bhi} and applying Lemma~\ref{lemma2} to the sets $B_h, ~h \geq 1$, we obtain
\begin{eqnarray*}
\lambda(W(\psi)) \geq \underbrace{\limsup_{H \to \infty} \frac{ \left(\sum_{h=1}^H \lambda(B_h) \right)^2}{2 \sum_{1 \leq h_1 < h_2  \leq H} \lambda(B_{h_1} \cap B_{h_2})}}_{\gg 1 ~\textrm{by \eqref{ll}}} > 0. 
\end{eqnarray*}
By Gallagher's zero-one law~\cite{gall} the measure of the set $W(\psi)$ can only be either 0 or 1. Consequently we have $\lambda(W(\psi))=1$, which proves the theorem.
\end{proof}

\section*{Acknowledgements}
I want to thank Liangpan Li, who suggested Corollary 1.

\end{document}